\newtheorem{theorem}{Theorem}
\newtheorem{lemma}{Lemma}
\newtheorem{corollary}{Corollary}
\def\de{\delta}
\def\al{\alpha}
\def\Ga{\Gamma}
\def\La{\Lambda}
\def\la{\lambda}
\def\Om{\Omega}
\def\om{\omega}
\def\kappa{\varkappa}
\def\C{{\mathbb C}}
\def\Z{{\mathbb Z}}
\def\sN{{\mathfrak S}_N}
\def\sl{\mathfrak{sl}_2}
\def\sinf{{\mathfrak S}_{\mathbb N}}
\def\maj{\operatorname{maj}}
\def\des{\operatorname{des}}
\def\Vir{\operatorname{Vir}}
\def\te{{\widetilde e}}
\def\T{{\cal T}}
\def\H{{\cal H}}
\def\X{{\cal X}}
\def\F{{\cal F}}
\def\K{{\cal K}}
\def\beq{\begin{equation}}
\def\eeq{\end{equation}}
\vershik\url{vershik@pdmi.ras.ru}
\natalia\url{natalia@pdmi.ras.ru}
\author{N.~V.~Tsilevich\thanks{%
St.~Petersburg Department of Steklov Institute of Mathematics and St.~Petersburg State University.
E-mail: \natalia, \vershik. Supported by the grants
RFBR 13-01-12422-ofi-m and RFBR 14-01-00373-a.}
\and A.~M.~Vershik\footnotemark[1]}
\title{On a relation between the basic representation of the affine Lie algebra $\widehat\sl$ and a Schur--Weyl representation of the infinite symmetric group}
\begin{document}
\maketitle

\begin{abstract}
We prove that there is a natural grading-preserving isomorphism of $\sl$-modules between
the basic module of the affine Lie algebra $\widehat\sl$ (with the homogeneous grading) and a Schur--Weyl module of the infinite symmetric group $\sinf$ with a grading defined through the combinatorial
notion of the major index of a Young tableau, and study the properties of this isomorphism. The results
reveal new and deep interrelations between the representation theory of $\widehat\sl$ and the Virasoro algebra on the one hand, and the representation theory of $\sinf$ and the related combinatorics on the other hand.
\end{abstract}

\section{Introduction}

In this paper we reveal some new and deep interrelations between two well developed branches
of representation theory: the representation theory of the infinite symmetric group and that of the
affine Lie and Virasoro algebras.
Our starting point  was the analogy observed in
\cite{SW} between the decomposition~\eqref{X} of a so-called Schur--Weyl representation of the infinite symmetric group $\sinf$ into irreducibles,
$$
\X=\sum_{k=0}^\infty M_{2k+1}\otimes\Pi_{k},
$$
and the decomposition~\eqref{virdecomp} of the basic representation of the affine Lie algebra $\widehat\sl$ into irreducible representations of the Virasoro algebra $\Vir$,
$$
\H_0=\bigoplus_{k=0}^\infty M_{2k+1}\otimes L(1,k^2);
$$
in these formulas $\Pi_{k}$ is an irreducible representation of $\sinf$, $L(1,k^{2})$ is an irreducible representation of $\Vir$, and $M_{2k+1}$ is the $(2k+1)$-dimensional irreducible representation of $\sl$; in both cases,  the operator algebras generated by the
actions of $\sinf$ or $\Vir$ and $\sl$ are mutual commutants.
This analogy suggested that there should be a natural action of $\Vir$ in the $\sinf$-module $\X$, or, equivalently, a natural action of $\sinf$ in the
$\widehat\sl$-module $\H_{0}$. The aim of this paper is to describe and study the underlying natural isomorphism of $\sl$-modules.

For this, we use the result of B.~Feigin and E.~Feigin \cite{FF} that the level~1 irreducible
highest weight representations of $\widehat\sl$ can be realized as certain inductive limits of tensor powers
$(\C^{2})^{\otimes N}$ of the two-dimensional irreducible representation of $\sl$. The construction of \cite{FF} is based on the notion of the fusion product of representations, whose
main ingredient is, in turn, a special grading in the space $(\C^{2})^{\otimes N}$. A key observation underlying the results of this paper is that the fusion product under consideration can be realized in
an $\sN$-module so that this special grading essentially coincides with a well-known
combinatorial characteristic of Young tableaux called the major index (see Sec.~\ref{sec:major}
and Theorem~\ref{th1}). Thus
our results provide, in particular, a kind of combinatorial description of the fusion product and show that
the combinatorial notion of the major index of a Young tableau has new and rich representation-theoretic meaning. For instance, Corollary~\ref{cor:vir} in Sec.~\ref{sec:iso}  shows that the so-called stable major indices of infinite Young tableaux are the eigenvalues of the Virasoro $L_{0}$ operator, the Gelfand--Tsetlin basis of the Schur--Weyl module being its eigenbasis.

The paper is organized as follows. In Secs.~\ref{sec:sw} and \ref{sec:fusion} we briefly reproduce the
necessary background on the notion of Schur--Weyl duality and the fusion product of representations,
respectively.  Section~\ref{sec:major} contains our finite-dimensional Theorem~\ref{th1} with
combinatorial interpretation of the fusion product grading via the major index of Young tableaux.
In Sec.~\ref{sec:limit}, we prove its infinite-dimensional version, our main Theorem~\ref{th2},
which states that the grading-preserving isomorphism of $\sl$-modules constructed in Theorem~
\ref{th1} extends, through the corresponding inductive limits, to
a grading-preserving isomorphism of $\sl$-modules between the basic $\widehat\sl$-module
$L_{0,1}$ and the Schur--Weyl module $\X$. The
remaining part of the paper is devoted to studying the key isomorphism in more detail. With this aim,
in Sec.~\ref{sec:fock} we describe the Fock space realizations of the involved representations
of $\widehat\sl$ and $\Vir$, and then, in Sec.~\ref{sec:iso}, prove some properties of our
isomorphism (Theorem~\ref{th3}).

\medskip
{\it For definiteness, in what follows we consider only the even case
$N=2n$. The odd case can be treated in exactly the same way; instead
of the basic representation $L_{0,1}$, it leads to the other level~$1$ highest weight representation
$L_{1,1}$ of $\widehat\sl$.}
\bigskip

\noindent {\bf Acknowledgments.} The authors are grateful to Igor Frenkel for many inspiring discussions, and also to Boris Feigin and Evgeny Feigin for introducing into the notion of fusion product.

\section{Infinite-dimensional Schur--Weyl duality}\label{sec:sw}
In \cite{SW}, the notion of infinite-dimensional Schur--Weyl duality
was introduced. Namely, starting from the classical Schur--Weyl
duality
\beq\label{swfin}
(\C^2)^{\otimes N}
=\sum_{k=0}^{n}M_{2k+1}\otimes\pi_{k},
\eeq
where $\pi_{k}$ is the irreducible representation of the
symmetric group $\sN$
corresponding to the two-row Young diagram
$\la^{(k)}=(n+k,n-k)$
and $M_{2k+1}$ is the $(2k+1)$-dimensional irreducible
representation of the special linear group $SL(2,\C)$,
we consider so-called Schur--Weyl embeddings $(\C^2)^{\otimes
N}\hookrightarrow(\C^2)^{\otimes (N+2)}$ that preserve this
Schur--Weyl structure, i.e.,
respect both the actions of $SL(2,\C)$ and $\sN$, and the
inductive limits of chains
\beq\label{even}
(\C^2)^{\otimes0}\hookrightarrow(\C^2)^{\otimes
2}\hookrightarrow(\C^2)^{\otimes
4}\hookrightarrow{\ldots}.
\eeq
Such an inductive limit has the form
\beq\label{X}
\X=\sum_{k=0}^\infty M_{2k+1}\otimes\Pi_{k},
\eeq
where $\Pi_{k}$ is an irreducible representation of the infinite
symmetric group $\sinf$
(an inductive limit of the sequence of irreducible
representations $\pi_k$ of $\sN$);
the operator algebras generated by the
actions
of $\sinf$ and $SL(2,\C)$ are mutual commutants.

\section{Fusion product}\label{sec:fusion}

The notion of the fusion product of finite-dimensional representations
of $\sl$ was introduced in \cite{FL}. Given an $\sl$-representation $\rho$ and
$z\in\C$, let $\rho(z)$ be the evaluation representation of the polynomial
current algebra $\sl\otimes\C[t]$, defined as $(x\otimes
t^i)v=z^i\cdot xv$ for $x\in\sl$, $v\in\rho$. Now, given a collection
$\rho_1,{\ldots} ,\rho_N$ of irreducible representations of $\sl$
with lowest weight vectors $v_1,{\ldots} ,v_N$, and
a collection $z_1,{\ldots} ,z_N$ of pairwise distinct complex numbers,
we consider the tensor product of the corresponding evaluation
representations: $ V_N=\rho_1(z_1)\otimes{\ldots} \otimes\rho_n(z_N)$.
The crucial step is introducing a special grading in $V_N$ by setting
$$
V_N^{(m)}=U^{(m)}(e\otimes\C[t])(v_1\otimes{\ldots} \otimes
v_n)\subset V_N,
$$
where $e=\begin{pmatrix} 0&1\\0&0\end{pmatrix}$ is the raising
operator in $\sl$ and
$U^{(m)}$ is spanned by homogeneous elements of degree $m$ in $t$. In
other words, $V_N^{(m)}$  is spanned by the monomials of the form
$$
e_{i_1}{\ldots} e_{i_k},\quad i_1+{\ldots} +i_k=m,
$$
where $e_j=e\otimes t^j$. Then we consider the
corresponding filtration on $V_N$:
$$
V_N^{(\le m)}=\bigoplus_{k\le m} V_N^{(k)}.
$$

The fusion product of $\rho_1,{\ldots} ,\rho_N$ is the graded
representation with respect to the above filtration:
\beq\label{gr}
V_N^*=\operatorname{gr}V_N=V_N^{(\le0)}\oplus V_N^{(\le1)}/V_N^{(\le0)}\oplus
V_N^{(\le2)}/V_N^{(\le1)}\oplus{\ldots} .
\eeq
The space $V_N^*[k]=V_N^{(\le k)}/V_N^{(\le k-1)}$ is the subspace of
elements of degree $k$, and elements of the form $x\otimes
t^l\in\sl\otimes C[t]$ send $V_N^*[k]$ to $V_N^*[k+l]$.
The degree of an element with respect to this grading will be denoted
by $\widetilde\deg$.

It is proved in \cite{FL} that $V_N^*$ is an
$\sl\otimes(\C[t]/t^n)$-module that does not depend on $z_1,{\ldots}
,z_N$ provided that they are pairwise distinct. Moreover, $V_N^*$ is
isomorphic to $\rho_1\otimes{\ldots} \otimes\rho_N$ as an $\sl$-module.

We apply this construction to the case where
$\rho_1={\ldots} =\rho_N=M_2$ with $M_2=\C^2$ being the two-dimensional
irreducible representation of $\sl$ with the lowest weight
vector $v_0$.
In this case,
$$
V_N^*\simeq(\C^2)^{\otimes N}\quad\mbox{as an
$\sl$-module}.
$$
We equip $V_N^*$ with the inner product such that the corresponding representation
of $\sl$ is unitary.

Consider the decomposition of $V_N^*$ into irreducible $\sl$-modules:
$$
V_N^*=\bigoplus_{k=0}^nM_{2k+1}\otimes{\cal M}_k.
$$
By the
classical Schur--Weyl duality~\eqref{swfin}, we know that the
multiplicity space ${\cal M}_k$ is the space of the irreducible
representation $\pi_k$ of $\sN$.
On the other hand, it
inherits the grading from $V_N^*$:
\beq\label{mult}
{\cal M}_k=\bigoplus_{i\ge0}{\cal M}_k[i],
\eeq
where ${\cal M}_k[i]={\cal M}_k\cap V^*[i]$.
Consider the corresponding $q$-character
$$
\operatorname{ch}_q{\cal M}_k=\sum_{i\ge0}q^i\dim{\cal M}_k[i].
$$
It was proved in \cite{Kedem} that
\beq\label{kedem}
\operatorname{ch}_q{\cal M}_k=q^{\frac{N(N-1)}2}\cdot K_{\la_k,1^N}(1/q),
\eeq
where $K_{\la,\mu}$ is the Kostka--Foulkes polynomial (see
\cite[Sec.~III.6]{Mac}).

\section{Major index and the tableaux realization of the fusion product}\label{sec:major}

Let $T_N$ be the set of all standard Young tableaux of length $N$
with at most two rows.

As was proved in \cite{LS},
\beq\label{LS}
K_{\la,1^N}(q)=\sum_{\tau\in[\la]}q^{c(\tau)},
\eeq
where  $[\la]$ is the set of standard Young tableaux of shape $\la$
and $c(\tau)$ is the so-called charge of a tableau $\tau\in T_N$, defined
as the sum of $i\le N-1$ such that in $\tau$ the element $i+1$ lies to
the right of $i$ (see \cite{Mac}).

It is more convenient for our purposes to use another statistic on
Young tableaux, namely,
the major index, defined as
follows (see \cite[Sec.~7.19]{St}):
$$
\maj(\tau)=\sum_{i\in\des(\tau)} i,
$$
where, for $\tau\in T_{N}$,
$$
\des(\tau)=\{i\le N-1: \mbox{ the element $i+1$  in $\tau$ lies lower than $i$}\}
$$
is the descent set of $\tau$. Obviously, for $\tau\in T_N$ we have
$\maj(\tau)=\frac{N(N-1)}2-c(\tau)$.
Then it follows from
\eqref{kedem} and \eqref{LS} that
\beq\label{dim}
\dim{\cal M}_k[i]=\#\{\tau\in[(n+k,n-k)]:\maj(\tau)=i\}.
\eeq

Denote by $\X_N$ the space $(\C^2)^{\otimes N}
=\sum_{k=0}^{n}M_{2k+1}\otimes\pi_{k}$ (see~\eqref{swfin}) in which
the irreducible representation $\pi_k$ of $\sN$ is realized in
the space spanned by the standard Young tableaux of shape $(n+k,n-k)$
equipped with the standard inner product under which the
representation is unitary. Note that this is an $\sl$-module endowed additionally with the grading $\maj$.

\begin{theorem}\label{th1}
There is a grading-preserving unitary isomorphism of the fusion product $V_N^*$ (with the grading $\widetilde\deg$) and the
space $\X_N$ (with the grading $\maj$) as $\sl$-modules such that
the multiplicity space ${\cal M}_k$ is spanned by the standard Young tableaux $\tau$ of shape
$(n+k,n-k)$ (and hence  ${\cal M}_k[i]$ is spanned by $\tau$ with $\maj(\tau)=i$).
\end{theorem}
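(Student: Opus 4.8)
The plan is to reduce the statement to a comparison of graded dimensions on the multiplicity spaces, and to that end first to recognize that both gradings in play are $\sl$-module gradings supported entirely on multiplicity spaces. Since the fusion product $V_N^*$ is isomorphic to $(\C^2)^{\otimes N}$ as an $\sl$-module, and $\X_N$ is by definition this same module with its Schur--Weyl decomposition~\eqref{swfin}, there certainly exists \emph{some} unitary $\sl$-isomorphism $V_N^*\to\X_N$; by Schur's lemma any such map has the form $\bigoplus_k\mathrm{id}_{M_{2k+1}}\otimes\phi_k$, where $\phi_k\colon{\cal M}_k\to\pi_k$ is a unitary between the multiplicity spaces (the invariant inner product on each irreducible $M_{2k+1}$ being unique up to a scalar). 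The whole problem is therefore to choose the $\phi_k$ so as to respect the two gradings.

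Next I would check that both gradings live on the multiplicity spaces. On the fusion-product side, $\sl$ embeds into $\sl\otimes\C[t]$ as the degree-zero component $x\mapsto x\otimes t^0$, and, as recorded after~\eqref{gr}, the operator $x\otimes t^l$ raises $\widetilde\deg$ by $l$; hence the $\sl$-action preserves $\widetilde\deg$, each homogeneous component $V_N^*[i]$ is an $\sl$-submodule, and the grading descends to the multiplicity spaces exactly as in~\eqref{mult}, with a whole isotypic block $M_{2k+1}\otimes m$ (for $m\in{\cal M}_k[i]$) sitting in degree $i$. On the $\X_N$ side the grading is, by construction, the one assigning to the block $M_{2k+1}\otimes\tau$ the degree $\maj(\tau)$, so it too is an $\sl$-grading concentrated on the multiplicity spaces $\pi_k$. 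Thus a map of the above form is grading-preserving precisely when each $\phi_k$ carries ${\cal M}_k[i]$ onto the span of the tableaux $\tau$ of shape $(n+k,n-k)$ with $\maj(\tau)=i$.

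It remains to produce such $\phi_k$, and here the graded dimensions match by~\eqref{dim}: $\dim{\cal M}_k[i]=\#\{\tau\in[(n+k,n-k)]:\maj(\tau)=i\}$ is exactly the dimension of the degree-$i$ part of $\pi_k$. I would therefore choose, for each $k$ and each $i$, an arbitrary unitary from ${\cal M}_k[i]$ onto the span of the corresponding tableaux, sum over $i$ to get $\phi_k$, and assemble $\Phi=\bigoplus_k\mathrm{id}_{M_{2k+1}}\otimes\phi_k$; this $\Phi$ is unitary, $\sl$-equivariant, and grading-preserving by the preceding two paragraphs. The genuine content of the theorem is concentrated in~\eqref{dim}, which in turn rests on Kedem's $q$-character formula~\eqref{kedem} and the Lascoux--Schützenberger charge formula~\eqref{LS} together with the elementary identity $\maj(\tau)=\tfrac{N(N-1)}2-c(\tau)$; granting these (all established earlier in the text), the remaining argument is formal. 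The main conceptual obstacle, then, is the one already cleared: realizing that $\widetilde\deg$ is an $\sl$-module grading, so that matching \emph{graded} dimensions on multiplicity spaces---rather than some finer compatibility with the $e\otimes\C[t]$-action---is all that the construction of the isomorphism requires.
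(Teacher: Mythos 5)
Your proposal is correct and follows essentially the same route as the paper: the paper's own proof is exactly the observation that $V_N^*\simeq(\C^2)^{\otimes N}$ as an $\sl$-module together with the dimension count~\eqref{dim}, and your argument simply spells out the details (Schur's lemma reduction to multiplicity spaces, $\sl$-invariance of the grading $\widetilde\deg$, and the piecewise choice of unitaries) that the paper leaves implicit.
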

\begin{proof}
Follows from the fact that the fusion product
$V_N^*$ is isomorphic to $(\C^2)^{\otimes N}$ as an $\sl$-module and
equation~\eqref{dim}.
\end{proof}

\smallskip\noindent{\bf Remark 1.}
Observe that the isomorphism from Theorem~\ref{th1} is not
unique.
\medskip

\noindent{\bf Remark 2.} The isomorphism from Theorem~\ref{th1} determines
an action of the symmetric group $\sN$ on the space $V_N^*$. It does
not coincide with the original action of $\sN$ on $\C^{\otimes N}$.
\medskip

Given $\tau\in T_N$, let $k(\tau)$ be half the difference  of the lengths of the
first and the second row of $\tau$. Then, in view of the Schur--Weyl
duality, we can write
$$
V_N^*=\bigoplus_{\tau\in T_N}M_{2k(\tau)+1}(\tau),
$$
where $M_{2k(\tau)+1}(\tau)$ is the $(2k(\tau)+1)$-dimensional
$\sl$-module parametrized by $\tau$ as an element of the multiplicity
space.

\section{Embeddings and the limit}\label{sec:limit}

It is proved in \cite{FF} that there is an embedding
$$
j_N:V_N^*\to V^*_{N+2}
$$
equivariant with respect to the action of $\sl\otimes(\C[t^{-1}]/t^{-n})$, and
the corresponding inductive limit
$$
{\cal V}=\lim(V_N,j_N)
$$
is isomorphic to the basic representation $L_{0,1}$ of
the affine Lie algebra $\widehat\sl$.
This embedding satisfies
\beq\label{embdeg}
\widetilde\deg(j_Nx)=\widetilde\deg(x)-(N+1).
\eeq

Now consider the following natural embedding $i_N:T_N\to
T_{N+2}$: given a standard Young tableau $\tau$ of length $N$, its
image $i_N(\tau)$ is the standard Young tableau of length $N+2$
obtained from $\tau$  by adding the element $N+1$ to the first row and
the element $N+2$ to the second row.

Note that $i_N$ is, obviously, a Schur--Weyl
embedding in the sense of \cite{SW} (see Sec.~\ref{sec:sw}). Let
$\X$ be the corresponding inductive limit~\eqref{X}. Then $\Pi_k$ is
the discrete representation of the infinite symmetric group $\sinf$
associated with the tableau
\beq\label{tauk}
\tau_k=\begin{array}{ccccccc}
1&2&{\ldots} &2k&2k+1&2k+3&{\ldots} \\
2k+2&2k+4&{\ldots} &&&&
\end{array},
\eeq
which can be realized in the space (which, by abuse of notation, will also be denoted by
$\Pi_k$) spanned by the infinite two-row Young tableaux
tail-equivalent to $\tau_k$ (we denote the set of such tableaux by
$\T_k$). In what follows,
the tableaux $\tau_k$ will be called {\it principal}.

Obviously,
\beq\label{embmaj}
\maj(i_N(\tau))=\maj(\tau)+(N+1).
\eeq
Given $N=2n$ and $\tau\in T_N$,
denote $r_N(\tau)=n^2-\maj(\tau)$. Then
$r_{N+2}(i_N(\tau))=r_N(\tau)$, so that we have a well-defined grading
on the space $\Pi=\bigoplus_{k=0}^\infty\Pi_k$:
\beq\label{r}
r(\tau)=\lim_{n\to\infty}r_{2n}([\tau]_{2n})=\lim_{n\to\infty}(n^2-\maj([\tau]_{2n})),
\eeq
where $[\tau]_l$ is the initial part of length $l$ of the infinite
tableau $\tau$. We will call $r(\tau)$ the {\it  stable major index} of $\tau$.
Obviously, $r(\tau_k)=k^2$.

Our main theorem is the following.

\begin{theorem}\label{th2}
The grading-preserving unitary isomorphism of $\sl$-modules described in Theorem~{\rm\ref{th1}} extends to
a grading-preserving unitary isomorphism of $\sl$-modules between the spaces $\cal V$
and $\X$:
\beq\label{fpdecomp}
{\cal V}\simeq\X=\sum_{k=0}^\infty M_{2k+1}\otimes\Pi_{k}.
\eeq
Thus in the Schur--Weyl module $\X$, which is an $\sl$-module
and an $\sinf$-module, there is also a structure of
the basic $\widehat\sl$-module $L_{0,1}$. The corresponding
grading is given by the stable major index~\eqref{r}, that is,
for $w=x\otimes\tau\in M_{2k+1}\otimes\Pi_k$, we have
$\deg w=r(\tau)$.
\end{theorem}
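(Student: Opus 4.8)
The plan is to reduce the theorem to a comparison of graded dimensions and then to read off the grading combinatorially. Since a $\Z_{\ge0}$-graded $\sl$-module is determined up to grading-preserving isomorphism by the graded dimensions of the multiplicity spaces of the simple modules $M_{2k+1}$, it is enough to match these on the two sides. By \cite{FF} we have ${\mathcal V}\simeq L_{0,1}$ as a graded $\sl$-module, so by the $\sl\times\Vir$ decomposition of the basic module the multiplicity space of $M_{2k+1}$ in ${\mathcal V}$ is the irreducible Virasoro module $L(1,k^2)$, graded by $L_0$ with minimal weight $k^2$. On the Schur--Weyl side, Theorem~\ref{th1} together with \eqref{r} identifies the multiplicity space of $M_{2k+1}$ in $\X$ with $\Pi_k$, graded by the stable major index $r$, whose minimal value is $r(\tau_k)=k^2$. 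Matching these gradings is the whole content of the theorem.

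To produce the isomorphism I would pass the finite maps $\phi_N$ of Theorem~\ref{th1} to the inductive limit. By \eqref{embdeg} and \eqref{embmaj} the embeddings $j_N$ and $i_N$ shift $\widetilde\deg$ and $\maj$ by $N+1$, and $\phi_N$ identifies $\widetilde\deg$ with $\maj$; passing to the invariant normalizations---namely $r=n^2-\maj$ on the tableau side, matched on the fusion side with the $j_N$-stable shift of $\widetilde\deg$ that the Feigin--Feigin construction identifies with the homogeneous $L_0$-grading---I would use the freedom of Remark~1 to choose the $\phi_N$ coherently, so that they intertwine $j_N$ with $i_N$ and assemble into a unitary $\sl$-isomorphism $\Phi\colon{\mathcal V}\to\X$ agreeing on each level with the Schur--Weyl identification of Theorem~\ref{th1}. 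By construction $\Phi$ carries $L_0$ to $r$, giving $\deg w=r(\tau)$ for $w=x\otimes\tau$, consistent with the values $r(\tau_k)=k^2$ being the Virasoro lowest weights.

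The main obstacle is reconciling the two degree-shift conventions so that the grading-preserving $\phi_N$ genuinely intertwine the embeddings; equivalently, it is the $q$-character identity
\beq
\sum_{\tau\in\T_k}q^{\,r(\tau)}=\operatorname{ch}_q L(1,k^2)=\frac{q^{k^2}\bigl(1-q^{2k+1}\bigr)}{\prod_{m\ge1}(1-q^m)},
\eeq
whose left-hand side is the stable limit $\lim_{n\to\infty}\sum_{\tau\in[(n+k,n-k)]}q^{\,n^2-\maj(\tau)}$ of the finite generating functions \eqref{dim}. Establishing this limit---a statement about the two-row Kostka--Foulkes polynomials entering \eqref{kedem}, asserting that the stabilized major index on infinite tableaux tail-equivalent to $\tau_k$ reproduces the $c=1$, $h=k^2$ Virasoro character---is the combinatorial heart of the proof, after which unitarity and $\sl$-equivariance descend to the limit from the finite data of Theorem~\ref{th1} without difficulty.
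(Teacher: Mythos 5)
There is a genuine gap, and you flagged it yourself: you declare the stable $q$-character identity $\sum_{\tau\in\T_k}q^{r(\tau)}=\operatorname{ch}_qL(1,k^2)$ to be ``the combinatorial heart of the proof'' and then leave it unproven, so the step your whole argument rests on is an assertion. Moreover, this is a misdiagnosis of what the theorem needs. The claim that reconciling the degree shifts is ``equivalently'' that character identity is false: the shift bookkeeping is a purely finite-level matter settled by \eqref{embdeg} and \eqref{embmaj} (both embeddings shift their respective gradings by $N+1$, with signs matching the convention $\widetilde\deg\leftrightarrow-\maj$), and no stable limit of the Kostka--Foulkes generating functions \eqref{dim} ever enters the proof. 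In the paper the logic runs in the opposite direction: the character identity is a \emph{consequence} of Theorem~\ref{th2} combined with \eqref{virdecomp} (this is exactly Corollary~\ref{cor:vir}), not an input to it. There is also a secondary problem with your first paragraph: matching graded $\sl$-multiplicities on the two sides can only produce \emph{some} grading-preserving isomorphism, whereas the theorem asserts the existence of one that \emph{extends} the finite-level isomorphisms of Theorem~\ref{th1}, i.e., is compatible with the embeddings $j_N$ and $i_N$; that compatibility is the actual content and cannot be obtained from dimension counts of the limit spaces alone.

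What your second paragraph gestures at is the paper's proof, but the substance you defer to the character identity is instead supplied by a short induction using only finite-level data. Assuming $\rho_N\colon V_N^*\to\X_N$ has been chosen, write $V_{N+2}^*=j_N(V_N^*)\oplus\bigl(j_N(V_N^*)\bigr)^\perp$. On the first summand, $\rho_{N+2}:=i_N\circ\rho_N\circ j_N^{-1}$ is forced by commutativity of the diagram, and it still satisfies the two defining conditions (unitary $\sl$-isomorphism; grading correspondence) precisely because of \eqref{embdeg} and \eqref{embmaj}. On the orthogonal complement one takes \emph{any} unitary graded $\sl$-isomorphism onto $\bigl(i_N(\X_N)\bigr)^\perp$: such a map exists because Theorem~\ref{th1} at level $N+2$ guarantees that the ambient spaces $V_{N+2}^*$ and $\X_{N+2}$ have equal graded $\sl$-multiplicities, while the already-defined piece identifies $j_N(V_N^*)$ with $i_N(\X_N)$ respecting those multiplicities, so the complements match as well. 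Passing to the inductive limit finishes the proof. Your instinct to use the freedom of Remark~1 to choose the finite isomorphisms coherently was exactly right; you needed to carry out this complement-extension argument rather than replace it by an unproven (and unnecessary) combinatorial identity.
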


\smallskip\noindent{\bf Remark.}
As mentioned in the introduction, we consider in detail only the even case
just for simplicity of notation. Considering instead of \eqref{even} the chain
$(\C^2)^{\otimes1}\hookrightarrow(\C^2)^{\otimes
3}\hookrightarrow(\C^2)^{\otimes
5}\hookrightarrow{\ldots}$ and reproducing exactly
the same arguments, we will obtain a grading-preserving isomorphism of the corresponding Schur--Weyl representation with the other level~$1$ highest weight representation
$L_{1,1}$ of $\widehat\sl$.
\medskip

\begin{proof}
Since we are now considering
$\sl\otimes\C[t^{-1}]$ instead of $\sl\otimes\C[t]$, we should
slightly modify the previous constructions to take the minus sign into
account. Namely,
instead of~\eqref{mult} we now have
${\cal M}_{k}=\oplus_{i\ge0}{\cal M}_{k}[-i]$, and the isomorphism of
Theorem~\ref{th1} identifies ${\cal M}_{k}[-i]$ with the space spanned
by the tableaux $\tau$ of
shape $(n+k,n-k)$ such that $\maj(\tau)=i$. Denote this isomoprhism
between $V^*_N$ and $\X_N$ by $\rho_N$. Observe that the only
conditions we impose on $\rho_N$ are as follows: (a) $\rho_N$ is a
unitary isomorphism
of $\sl$-modules and (b) $\rho_N\circ\widetilde\deg=-\maj$.

Now, to prove Theorem~\ref{th2}, we need to show that we can choose
a sequence of isomorphisms $\rho_N$ such that the diagram
$$
\begin{CD}
V_N^*@>\rho_N>>X_N\\
@VVj_NV @ VVi_NV\\
V_{N+2}^*@>\rho_{N+2}>>X_{N+2}
\end{CD}
$$
is commutative for all $N$. We use induction on $N$. The base being
obvious, assume that we have already constructed $\rho_N$, and let us
construct $\rho_{N+2}$.

We have $V_{N+2}^*=j_N(V_N^*)\oplus (j_N(V_N^*))^\perp $. On the first subspace, we
set $\rho_{N+2}(x):=i_N(\rho_N(j_{N+2}^{-1}(x)))$. On the second
one, we define it in an arbitrary way to satisfy the desired
conditions~(a) and~(b). The fact that this definition is correct and
provides us with a desired isomorphism between $V_{N+2}^*$ and
$\X_{N+2}$ follows
from~\eqref{embdeg} and~\eqref{embmaj}.
\end{proof}

\begin{corollary}
The embedding $j_N:V_N^*\to V_{N+2}^*$ is equivariant with respect to
the action of the symmetric group ${\frak S}_N$ (see Remark~{\rm 2}
after Theorem~{\rm\ref{th1}}). Thus the limit space $\cal V$,
isomorphic to $L_{0,1}$, has the structure of a representation of the
infinite symmetric group $\sinf$.
\end{corollary}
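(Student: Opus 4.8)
The plan is to reduce the asserted $\sN$-equivariance of $j_N$ to the already-noted fact that the tableau embedding $i_N$ is a Schur--Weyl embedding, by chasing the commutative square of Theorem~\ref{th2}. First I recall how the $\sN$-action enters: by Remark~2 after Theorem~\ref{th1}, the action of $\sN$ on $V_N^*$ is the one transported from the standard Schur--Weyl action on $\X_N$ through the isomorphism $\rho_N$, i.e. $g\cdot x=\rho_N^{-1}(g\,\rho_N x)$ for $g\in\sN$ and $x\in V_N^*$; likewise the $\sN$-action on $V_{N+2}^*$ is obtained via $\rho_{N+2}$ using the natural inclusion $\sN\hookrightarrow{\mathfrak S}_{N+2}$ (the subgroup fixing $N+1$ and $N+2$). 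The goal is then to prove $j_N(g\cdot x)=g\cdot j_N(x)$ for every $g\in\sN$.

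The key step is the substitution $j_N=\rho_{N+2}^{-1}\circ i_N\circ\rho_N$, which is exactly the commutativity of the diagram established in Theorem~\ref{th2}. Feeding this into the two sides of the desired identity, cancelling the outer copy of $\rho_{N+2}^{-1}$, and using the defining relation $\rho_N(g\cdot x)=g\,\rho_N x$, I find that $\sN$-equivariance of $j_N$ is equivalent to
$$
i_N(g\cdot y)=g\cdot i_N(y),\qquad y\in\X_N,\ g\in\sN,
$$
i.e. to the $\sN$-equivariance of the tableau embedding $i_N\colon\X_N\to\X_{N+2}$. This is precisely the content of $i_N$ being a Schur--Weyl embedding in the sense of Sec.~\ref{sec:sw}, which was observed right after its definition; concretely, since $i_N$ only appends the fixed boxes $N+1$ and $N+2$ and leaves the relative positions of $1,\dots,N$ untouched, the axial distances governing the action of the Coxeter generators $s_1,\dots,s_{N-1}$ in Young's orthogonal form are identical in $\tau$ and $i_N(\tau)$, so each $s_i$ acts the same way before and after the embedding.

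The step I expect to require the most care is conceptual rather than computational: the two $\sN$-actions being compared live on different spaces and are defined through two a priori unrelated isomorphisms, $\rho_N$ and $\rho_{N+2}$, whose only link is the commuting square of Theorem~\ref{th2}. Thus the whole point is that the \emph{specific} sequence $\{\rho_N\}$ constructed there (and not an arbitrary choice, cf. Remark~1) is what reconciles the source and target actions; once this square is invoked, the verification is immediate.

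Finally, for the concluding assertion I would pass to the inductive limit. Since $\bigcup_N\sN=\sinf$ and each $j_N$ is $\sN$-equivariant, the action of any fixed $g\in\sinf$ on the spaces $V_N^*$ (for all sufficiently large $N$) is compatible with the maps $j_N$ and hence descends to a well-defined operator on ${\cal V}=\lim(V_N^*,j_N)$. This endows ${\cal V}\simeq L_{0,1}$ with an action of $\sinf$; equivalently, one simply transports the standard Schur--Weyl $\sinf$-action on $\X$ through the limiting isomorphism $\rho=\lim\rho_N$ of Theorem~\ref{th2}.
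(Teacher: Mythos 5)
Your proposal is correct and follows exactly the route the paper intends: the corollary is an immediate consequence of the commuting square constructed in the proof of Theorem~\ref{th2} (i.e., $j_N=\rho_{N+2}^{-1}\circ i_N\circ\rho_N$), together with the fact that $i_N$ is a Schur--Weyl embedding and hence $\sN$-equivariant, after which the $\sinf$-action on $\cal V$ is obtained by passing to the inductive limit. Your explicit attention to the fact that one must use the \emph{specific} compatible sequence $\{\rho_N\}$ from Theorem~\ref{th2}, rather than arbitrary isomorphisms as in Theorem~\ref{th1}, correctly identifies the only delicate point, which the paper leaves implicit.
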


Let $\om_{-2k}$ be the lowest vector in $M_{2k+1}$. Then
a natural basis of $\cal V$ is
$
\{e_0^m\om_{-2k}\otimes\tau: m=0,1,{\ldots} ,2k,\;\tau\in\T_k\}
$.
Denoting ${\cal V}_k=M_{2k+1}\otimes\Pi_k$ and ${\cal
V}_k[0]=\{v\in{\cal V}_k:h_0v=0\}$, we have ${\cal
V}_k[0]=e_0^k\om_{-2k}\otimes\Pi_k$, so that
we may identify
${\cal V}_k[0]$ with $\Pi_k$ by the correspondence
$$
c(t)\cdot e_0^k\om_{-2k}\otimes t\leftrightarrow t,\qquad t\in\Pi_k,
$$
where $c(t)$ is a normalizing constant. Thus we have
\beq\label{corr0}
{\cal V}[0]:=\{v\in{\cal V}:h_0v=0\}\longleftrightarrow\Pi=\bigoplus_{k=0}^\infty\Pi_k,
\eeq
where $\Pi$ is the space spanned by all infinite two-row Young tableaux with
``correct'' tail behavior, i.e., tail-equivalent to $\tau_k$
(see~\eqref{tauk}) for some $k$.

Our aim in the remaining part of the paper is to study the isomorphism from Theorem~\ref{th2} in more detail. For this, we first describe the Fock space realization of the basic $\widehat\sl$-module and the fusion product.

\section{The Fock space}\label{sec:fock}

\subsection{The Fock space and the level~1 highest weight representations of
$\widehat\sl$}
Let $\F$ be the fermionic Fock space constructed as the infinite wedge
space over the linear space with basis
$\{u_k\}_{k\in\Z}\cup\{v_k\}_{k\in\Z}$.
That is, $\F$ is spanned by the semi-infinite forms
\begin{eqnarray*}
u_{i_1}\wedge{\ldots}\wedge u_{i_k}\wedge v_{j_1}\wedge{\ldots}\wedge v_{j_l}\wedge
u_N\wedge v_N\wedge u_{N-1}\wedge v_{N-1}\wedge{\ldots},\\
N\in\Z,\; i_1>{\ldots} >i_k>N,\; j_1>{\ldots} >j_l>N,
\end{eqnarray*}
and is equipped with the inner product in which such monomials are
orthonormal.
Let $\phi_k$ be the exterior multiplication by $u_k$
and $\psi_k$ be the exterior multiplication by $v_k$,
and denote by $\phi_k^*$, $\psi_k^*$ the corresponding adjoint
operators. Then this family of operators satisfies the canonical
anticommutation relations (CAR):
$$
\phi_k\phi_k^*+\phi^*_k\phi_k=1,\qquad \psi_k\psi_k^*+\psi^*_k\psi_k=1,
$$
all the other anticommutators being zero.

Consider the generating functions
$$
\phi(z)=\sum_{i\in\Z}\phi_iz^{-(i+1)},\quad\psi(z)=\sum_{i\in\Z}\psi_iz^{-(i+1)},\quad
\phi^*(z)=\sum_{i\in\Z}\phi_i^*z^i,\quad\psi^*(z)=\sum_{i\in\Z}\psi_i^*z^i.
$$

Let $a^\phi_n$ and $a^\psi_n$
be the systems of bosons constructed from the fermions $\{\phi_k\}$ and
$\{\psi_k\}$, respectively:
$$
a^\phi_0=\sum_{n=1}^\infty\phi_{n}\phi_{n}^*-\sum_{n=0}^\infty\phi_{-n}^*\phi_{-n},\quad
a^\phi_n=\sum_{k\in\Z}\phi_{k}\phi^*_{k+n},\quad n\ne0,
$$
and similarly for $a^\psi$. They satisfy the canonical commutation
relations (CCR)
\beq\label{CCR}
[a_n^\phi,a_m^\phi]=n\de_{n,-m},\qquad [a_n^\psi,a_m^\psi]=n\de_{n,-m},
\eeq
i.e., form a representation of the Heisenberg algebra $\mathfrak A$.
Denote
$$
a^\phi(z)=\sum_{n\in\Z}a^\phi_nz^{-(n+1)},\quad
a^\psi(z)=\sum_{n\in\Z}a^\psi_nz^{-(n+1)}.
$$

Let $V$ be the operator in $\F$ that shifts the indices by 1:
$$
V(w_{i_1}\wedge w_{i_2}\wedge{\ldots} )=V_0(w_{i_1})\wedge
V_0(w_{i_2})\wedge{\ldots},\quad V_0(u_i)=u_{i+1},\quad
V_0(v_i)=v_{i-1}.
$$

The vacuum vector in $\F$ is $\Om=u_{-1}\wedge v_{-1}\wedge
u_{-2}\wedge v_{-2}\wedge{\ldots} $. We also consider the family of
vectors
$$
\Om_0=\Om,\quad \Om_{2n}=V^{-n}\Om_0,\quad n\in\Z.
$$

In the space $\F$ we have a canonical representation of the affine
Lie algebra $\widehat\sl=\sl\otimes\C[t,t^{-1}]\oplus\C c\oplus\C d$, which is given by the following formulas.
Given $x\in\sl$, denote $X(z)=\sum_{n\in\Z}x_nz^{-(n+1)}$. Then
\begin{eqnarray*}
E(z)=\psi(z)\phi^*(z),\qquad
F(z)=\phi(z)\psi^*(z),\\
h_n=a^\psi_{-n}-a^\phi_{-n},\qquad
d=\frac{h_0^2}2+\sum_{n=1}^\infty h_{-n}h_n,\qquad c=1. 
\end{eqnarray*}

We have
$$
\F=\H_0\otimes\K_0+\H_1\otimes\K_1,
$$
where $\H_0\simeq L_{0,1}$ and $\H_1\simeq L_{1,1}$ are the
irreducible  level~1 highest weight representations of
$\widehat\sl$ and  $\K_0$ and $\K_1$ are the multiplicity spaces.
Observe also that
$$
e_{-(N+1)}\Om_{-N}=\Om_{-(N+2)}.
$$

Note that the operators $a_n=\frac1{\sqrt{2}}h_n$
satisfy the CCR~\eqref{CCR}, i.e., form a system of free bosons, or
generate the Heisenberg algebra ${\mathfrak A}_h$.
The vectors $\{\Om_{2n}\}_{n\in\Z}$ introduced above are exactly singular vectors for
this Heisenberg algebra:
$h_k\Om_m=0$ for $m<0$, $h_0\Om_m=m\Om_m$. The representation of
${\mathfrak A}_h$ in $\H_0$ breaks into a direct sum of
irreducible representations:
\beq
\H_0=\bigoplus_{k\in\Z}\H_0[2k],
\eeq
where $\H_0[2k]$ is the charge~$2k$ subspace, i.e., the eigenspace of
$h_0$ with eigenvalue $2k$:
$$\H_0[2k]=\{v\in\H_0:h_0v=2kv\}=\C[h_0,h_1,{\ldots} ]\Om_{2k}.
$$

\subsection{The representation of the Virasoro algebra
associated with the basic representation of $\widehat\sl$}

Given a representation of the affine Lie algebra $\widehat\sl$, we can
use the Sugawara construction to obtain the corresponding
representation of the Virasoro algebra $\Vir$. It can also be
described in the following way.  As noted above, the operators $a_n=\frac1{\sqrt{2}}h_n$
form a system of free bosons. Given
such a system, a representation of $\Vir$ can be constructed as
follows (\cite[Ex.~9.17]{Kac}):
$$
L_n=\frac12\sum_{j\in\Z}a_{-j}a_{j+n},\quad n\ne0;\qquad
L_0=\sum_{j=1}^\infty a_{-j}a_j.
$$
Thus we obtain a representation of $\Vir$ in $\F$ and, in particular,
in $\H_0$. In this representation, the algebras generated by the
operators of $\Vir$ and
$\sl\subset\widehat\sl$ are mutual commutants, and we have the decomposition
\beq\label{virdecomp}
\H_0=\bigoplus_{k=0}^\infty M_{2k+1}\otimes L(1,k^2),
\eeq
where $M_{2k+1}$ is the $(2k+1)$-dimensional irreducible representation of $\sl$
and $L(1,k^2)$ is the irreducible representation of $\Vir$ with
central charge~1 and conformal dimension $k^2$.

The charge $k$ subspace $\H_0[k]$ contains a series of singular
vectors $\xi_{k,m}$ of $\Vir$  with energy $(k+m)^2$:
$$
L_n\xi_{k,m}=0 \mbox{ for }n=1,2,{\ldots} ,\qquad L_0\xi_{k,m}=(k+m)^2.
$$

Let us use the so-called
homogeneous vertex operator construction of the basic
representation of $\widehat\sl$ (see \cite[Sec.~14.8]{Kac}). In this realization,
\beq\label{EF}
E(z)=\Ga_-(z)\Ga_+(z)z^{-h_0}V^{-1},\qquad
F(z)=\Ga_+(z)\Ga_-(z)z^{h_0}V,
\eeq
where
$$
\Ga_\pm(z)=\exp\left(\mp\sum_{j=1}^\infty \frac{z^{\pm j}}jh_{\pm j}\right)
$$
and the operators $\Ga_\pm(z)$ satisfy the commutation relation
\beq\label{Ga}
\Ga_+(z)\Ga_-(w)=\Ga_-(w)\Ga_+(z)\left(1-\frac zw\right)^2.
\eeq

Using the
boson--fermion correspondence (see \cite[Ch.~14]{Kac}), we can
identify $\H_0$ with the space $\La\otimes\C[q,q^{-1}]$, where $\La$ is
the algebra
of symmetric functions (see \cite{Mac}). In particular, consider
the charge~0 subspace $\H[0]=\H_0[0]$, which is
identified with $\La$. We can use the following representation of the
Heisenberg algebra generated by $\{h_n\}_{n\in\Z}$:
\beq\label{power}
h_n\leftrightarrow 2n\frac\partial{\partial p_n},\qquad
h_{-n}=p_n,\qquad n>0,
\eeq
where $p_j$ are Newton's power sums. Then
the corresponding Virasoro operators are
\beq\label{vir0}
\begin{aligned}
L_n&=\sum_{r=n+1}^\infty p_{n-r}\cdot r\frac\partial{\partial p_r}+
\sum_{r=1}^{n-1}r(n-r)\cdot\frac\partial{\partial
p_r}\frac\partial{\partial p_{n-r}},\\
L_{-n}&=\sum_{r=1}^\infty p_{n+r}\cdot r\frac\partial{\partial p_r}+
\frac14\sum_{r=1}^{n-1}p_rp_{n-r},\qquad n>0.
\end{aligned}
\eeq
Note that the representation~\eqref{power} of the Heisenberg algebra,
and hence the representation~\eqref{vir0} of the Virasoro algebra, are
not unitary with respect to the standard inner product in $\La$. To
make it unitary, we should consider the inner product in $\La$ defined
by
\beq
\langle p_\la,p_\mu\rangle=\de_{\la\mu}\cdot z_\la\cdot 2^{l(\la)},
\eeq
where $p_\la$ are the power sum symmetric functions, $z_\la=\prod_i
i^{m_i}m_i!$ for a Young diagram $\la$ with $m_i$ parts of length $i$, and
$l(\la)$ is the length (number of nonzero rows) of $\la$.

Denote the singular vectors of $\Vir$ in $\H[0]$ by
$\xi_m:=\xi_{0,m}$. According to a result by Segal \cite{Segal}, in
the symmetric function realization~\eqref{power},
\beq\label{segal}
\xi_{n}\leftrightarrow c\cdot s_{(n^n)},
\eeq
where $s_{(n^n)}$ is the Schur function indexed by the $n\times n$
square Young diagram and $c$ is a numerical coefficient.

\subsection{Fusion product and the Fock space}
It is shown in \cite{FF} that
$$
V_{2n}^*\simeq\C[e_0,{\ldots} ,e_{-(2n-1)}]\Om_{-2n}\subset\F
$$
as an $\sl\otimes(\C[t^{-1}]/t^{-2n})$-module, the embedding $j_{2n}$
under this isomorphism coincides with the natural inclusion
$$\C[e_0,{\ldots} ,e_{-(2n-1)}]\Om_{-2n}\subset\C[e_0,{\ldots}
,e_{-(2n+1)}]\Om_{-2(n+1)},$$ and the limit space $\cal V$ coincides
with $\H_0$.

Using results of \cite{FF}, one can easily prove the following lemma.

\begin{lemma}\label{l:ebasis}
A basis in $F_{2n}=\C[e_0,{\ldots},e_{-(2n-1)}]\Om_{-2n}$ is
$$
\{e_0^{i_0}e_{-1}^{i_1}{\ldots} e_{-(2n-1)}^{i_{2n-1}}:0\le k\le
2n-(i_0+{\ldots} +i_{2n-1})\}\Om_{-2n}.
$$
\end{lemma}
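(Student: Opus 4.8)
The plan is to establish that the stated monomials form a basis for $F_{2n}=\C[e_0,\ldots,e_{-(2n-1)}]\Om_{-2n}$ by combining a dimension count with a linear independence argument. First I would recall the two structural facts available from the preceding sections. On one hand, the isomorphism $V_{2n}^*\simeq\C[e_0,\ldots,e_{-(2n-1)}]\Om_{-2n}$ from \cite{FF} realizes $F_{2n}$ as the fusion product, and by the discussion in Sec.~\ref{sec:fusion} this space is isomorphic to $(\C^2)^{\otimes 2n}$ as an $\sl$-module, hence $\dim F_{2n}=2^{2n}=4^n$. On the other hand, the operators $e_j=e\otimes t^j$ satisfy $e_j^2=0$ when acting through a single tensor slot, but their products acting on $\Om_{-2n}$ need not vanish; the point is that $\Om_{-2n}$ is killed by all $e_{-(2n)},e_{-(2n+1)},\ldots$, which is exactly why only the generators $e_0,\ldots,e_{-(2n-1)}$ appear.

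The key step is to pin down the correct index constraint. The statement as written, ``$0\le k\le 2n-(i_0+\cdots+i_{2n-1})$,'' introduces an auxiliary parameter $k$ that must encode the highest-weight content: after applying a raising monomial of total $e$-degree $i_0+\cdots+i_{2n-1}=d$ to the lowest vector $\Om_{-2n}$ (which sits in weight $-2n$), the resulting vector has $h_0$-weight $-2n+2d$, and the admissible vectors are those lying in irreducible $\sl$-summands $M_{2k+1}$ with $k\le n$, the constraint $0\le k\le 2n-d$ recording precisely which multiplicity spaces survive. So I would first reinterpret the displayed set correctly as a basis indexed by exponent tuples $(i_0,\ldots,i_{2n-1})$ subject to the bound coming from the $\sl$-module structure, and then verify that the number of such tuples equals $4^n$.

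The counting itself reduces to a generating-function identity. Each monomial $e_0^{i_0}\cdots e_{-(2n-1)}^{i_{2n-1}}\Om_{-2n}$ carries a $\widetilde\deg$-grading equal to $-(i_1+2i_2+\cdots+(2n-1)i_{2n-1})$ and a weight determined by $d=\sum i_j$; by Theorem~\ref{th1} and equation~\eqref{dim}, the graded dimensions on the tableaux side are governed by Kostka--Foulkes polynomials via \eqref{kedem}, so I would match the count of admissible exponent tuples against $\sum_{k=0}^n(2k+1)\cdot\#\{\tau\in[(n+k,n-k)]\}$, which sums to $4^n$. I expect the main obstacle to be the linear independence of these monomials inside $\F$: because the $e_j$ do not commute freely (they generate only $\sl\otimes(\C[t^{-1}]/t^{-2n})$, with relations among products of $e$'s acting on $\Om_{-2n}$), one cannot simply cite freeness of a polynomial algebra. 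The cleanest route is to show that the spanning set has exactly $4^n$ elements, matching $\dim F_{2n}$, so that spanning forces independence; establishing that the monomials do span $F_{2n}$ follows from \cite{FF} together with the fusion-product presentation, while the exact-count bookkeeping—correctly handling the $k$-range so that no monomials are double-counted or spuriously included—is where the real care is needed.
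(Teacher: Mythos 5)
The paper itself contains no proof of this lemma: the only justification it offers is the sentence preceding it, ``Using results of \cite{FF}, one can easily prove the following lemma.'' So your attempt has to stand on its own, and it has genuine gaps. The decisive one is that you never produce a well-defined set of monomials to count. You rightly notice that the printed condition ``$0\le k\le 2n-(i_0+\cdots+i_{2n-1})$'' contains a stray parameter $k$, but your repair --- reading $k$ as recording which $\sl$-summands $M_{2k+1}$ ``survive'' --- is not a condition on the exponent tuple $(i_0,\ldots,i_{2n-1})$ at all: the monomial $e_0^{i_0}\cdots e_{-(2n-1)}^{i_{2n-1}}\Om_{-2n}$ is one specific vector, it does not come equipped with a choice of $k$, and in general it has nonzero components in several isotypic pieces $M_{2k+1}\otimes{\cal M}_k$ simultaneously. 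Whatever the intended constraint is, it must be purely combinatorial, and the only piece you retain (total degree at most $2n$) is far too weak: it admits $\binom{4n}{2n}$ tuples against $\dim F_{2n}=4^n$. Already for $n=1$ the degree bound allows six monomials while $\dim F_2=4$: the three degree-two monomials $e_0^2\Om_{-2}$, $e_0e_{-1}\Om_{-2}$, $e_{-1}^2\Om_{-2}$ all lie in the same extreme weight space of $F_2\simeq M_3\oplus M_1$, which is one-dimensional, so at least two relations hold among them (they stem from the level-one identity $E(z)^2=0$, a consequence of \eqref{EF} and \eqref{Ga}). Determining which monomials to keep, and proving that they still span when rewriting is confined to the window $e_0,\ldots,e_{-(2n-1)}$, is precisely the content of the lemma; your plan defers the spanning to \cite{FF} and the enumeration to unspecified bookkeeping over an undefined index set, so in the end nothing is established beyond what the citation already asserts.

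There is also a factual error. You claim that $\Om_{-2n}$ is killed by $e_{-2n},e_{-(2n+1)},\ldots$, and that this is ``exactly why'' only the generators $e_0,\ldots,e_{-(2n-1)}$ appear. The paper states $e_{-(N+1)}\Om_{-N}=\Om_{-(N+2)}$, which for $N=2n$ gives $e_{-(2n+1)}\Om_{-2n}=\Om_{-(2n+2)}\neq0$; indeed it is by applying such operators that one passes from $F_{2n}$ to $F_{2n+2}$ in the inductive limit. The restriction to the stated window of generators is part of the definition of $F_{2n}$ --- it is how \cite{FF} identify the image of $V_{2n}^*$ inside $\F$ --- not a consequence of annihilation. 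What is sound in your proposal is the skeleton: $\dim F_{2n}=2^{2n}$ does follow from $V_{2n}^*\simeq(\C^2)^{\otimes 2n}$, and a spanning set of that cardinality is automatically a basis. But both steps that would hang on this skeleton, the exact description of the index set and the spanning argument, are missing.
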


Observe that under this ``fusion--Fock'' correspondence,
the charge 0 subspace $\H[0]$ is identified with ${\cal
V}[0]$. It follows from Lemma~\ref{l:ebasis} that
a basis of $F_{2n}[0]=F_{2n}\cap\H[0]$ is
\beq\label{ebasis0}
\{\prod e_0^{i_0}e_{-1}^{i_1}{\ldots}e_{-n}^{i_n}:i_0+i_1+{\ldots}
+i_{n}=n\}\Om_{-2n}.
\eeq

\section{The key isomorphism in more detail}\label{sec:iso}

Comparing \eqref{fpdecomp} and \eqref{virdecomp}, we obtain the
following result.

\begin{corollary}
The space $\Pi_k$ of the discrete representation of the infinite
symmetric group corresponding to the tableau $\tau_k$ has a natural
structure of the Virasoro module $L(1,k^2)$.
\end{corollary}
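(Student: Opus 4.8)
The plan is to obtain the statement as a direct comparison of the two decompositions \eqref{fpdecomp} and \eqref{virdecomp}, transporting the Virasoro action across the $\sl$-isomorphism supplied by Theorem~\ref{th2}. First I would invoke Theorem~\ref{th2} to fix a grading-preserving isomorphism of $\sl$-modules $\Phi\colon\X\to{\cal V}$, recalling that ${\cal V}\simeq\H_0\simeq L_{0,1}$ is the basic module and thus carries the Virasoro action~\eqref{vir0}. Since the decomposition of an $\sl$-module into isotypic components is canonical, $\Phi$ automatically carries the $M_{2k+1}$-isotypic component of $\X$ onto the $M_{2k+1}$-isotypic component of $\H_0$ for every $k$; in particular the indices match up with no extra bookkeeping, because both \eqref{fpdecomp} and \eqref{virdecomp} are built from the same irreducible factor $M_{2k+1}$.

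The core step is to push forward the Virasoro action. On $\H_0$ the operators $L_n$ commute with the action of $\sl\subset\widehat\sl$ (these generate mutually commutant algebras, as noted after~\eqref{virdecomp}), so I define a $\Vir$-action on $\X$ by $L_n^{\X}:=\Phi^{-1}L_n\Phi$. A one-line check shows that this action again commutes with $\sl$: for $x\in\sl$ and $v\in\X$ we have $L_n^{\X}(xv)=\Phi^{-1}L_n\Phi(xv)=\Phi^{-1}L_nx\Phi(v)=\Phi^{-1}xL_n\Phi(v)=x\Phi^{-1}L_n\Phi(v)=xL_n^{\X}v$, using the $\sl$-equivariance of $\Phi$ and $\Phi^{-1}$ together with $[L_n,x]=0$. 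Consequently $L_n^{\X}$ preserves each summand $M_{2k+1}\otimes\Pi_k$ and, commuting with $\sl$, acts only on the multiplicity factor $\Pi_k$. Restricting to $\Pi_k$ thus endows it with a $\Vir$-module structure.

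It remains to identify this structure with $L(1,k^2)$. This is immediate from the commutant property: the multiplicity space of $M_{2k+1}$ may be described invariantly as $\operatorname{Hom}_{\sl}(M_{2k+1},\H_0)$, on which the commuting Virasoro algebra acts, and by \eqref{virdecomp} this module is precisely $L(1,k^2)$. Since $\Phi$ induces an isomorphism $\operatorname{Hom}_{\sl}(M_{2k+1},\X)\to\operatorname{Hom}_{\sl}(M_{2k+1},\H_0)$ intertwining $L_n^{\X}$ with $L_n$, and the left-hand side is exactly $\Pi_k$, I conclude that $\Pi_k\simeq L(1,k^2)$ as $\Vir$-modules. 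As a consistency check, the gradings agree: Theorem~\ref{th2} gives $\deg=r(\tau)$ on $\Pi_k$ with $r(\tau_k)=k^2$, matching the conformal dimension $k^2$ of the lowest vector of $L(1,k^2)$.

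I expect no serious obstacle here, since the argument is a formal transport of structure along a canonical $\sl$-isotypic decomposition. The only point genuinely requiring care is the verification that the pushed-forward action $L_n^{\X}$ still commutes with $\sl$, which is what guarantees that it descends to the multiplicity spaces $\Pi_k$ rather than mixing the $M_{2k+1}$-factors; this is precisely the one-line computation above.
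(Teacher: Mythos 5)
Your proposal is correct and takes essentially the same approach as the paper, whose entire proof is the one-line remark that the corollary follows by comparing \eqref{fpdecomp} and \eqref{virdecomp}. Your transport-of-structure argument --- pushing the Virasoro action through the $\sl$-isomorphism of Theorem~\ref{th2}, checking it still commutes with $\sl$, and hence identifying the multiplicity spaces $\Pi_k$ and $L(1,k^2)$ via the mutual-commutant property --- is exactly the detail that the paper leaves implicit in that comparison.
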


Our aim is to study this Virasoro representation in $\Pi_k$ (or, which
is equivalent, the corresponding representation of the infinite
symmetric group in the Fock space). In particular, from the known
theory of the basic module $L_{0,1}$, we immediately obtain the following
result.

\begin{corollary}\label{cor:vir}
In the above realization of the Virasoro module $L(1,k^2)$,
the Gelfand--Tsetlin basis in $\Pi_k$ (which consists of the infinite
two-row Young tableaux tail-equivalent to $\tau_k$) is the eigenbasis of
$L_0$, and the eigenvalues are given by the stable major index $r$:
$$
L_0\tau=r(\tau)\tau.
$$
\end{corollary}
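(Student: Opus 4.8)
The plan is to assemble Corollary~\ref{cor:vir} from pieces already established, treating it as essentially a translation between two labelings of the same eigenbasis. First I would observe that under the main isomorphism of Theorem~\ref{th2}, the space $\Pi_k$ sits inside $\X\simeq\cal V\simeq L_{0,1}$ as the multiplicity space of $M_{2k+1}$, and that this identification is \emph{grading-preserving}: a vector $w=x\otimes\tau\in M_{2k+1}\otimes\Pi_k$ carries $\widehat\sl$-degree equal to the stable major index $r(\tau)$. The immediate task is to identify this grading with the eigenvalue of the Virasoro operator $L_0$.

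Next I would invoke the Sugawara/free-boson construction of Sec.~\ref{sec:fock}, where $L_0=\sum_{j=1}^\infty a_{-j}a_j$ with $a_n=\tfrac1{\sqrt2}h_n$. Since $d=\tfrac{h_0^2}2+\sum_{n=1}^\infty h_{-n}h_n$ and $h_n=\sqrt2\,a_n$, one sees directly that $d=\tfrac{h_0^2}2+2\sum_{n=1}^\infty a_{-n}a_n$, so on the charge-$0$ sector (where $h_0$ acts as $0$) the homogeneous degree operator $d$ and the Virasoro $L_0$ differ only by an overall factor; more care is needed on the charge-$2k$ sectors, where the $\tfrac{h_0^2}2$ term contributes the conformal weight $k^2$ matching the label of $L(1,k^2)$. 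The point is that $L_0$ measures the homogeneous grading of $\H_0=L_{0,1}$ up to the additive shift $k^2$ coming from the highest weight, and on each irreducible $\Vir$-component $L(1,k^2)$ this grading is exactly the energy. Comparing \eqref{fpdecomp} with \eqref{virdecomp} then pins down the $\Vir$-action on $\Pi_k$ as $L(1,k^2)$, so that $L_0$ acts on $\Pi_k$ by the same homogeneous degree that Theorem~\ref{th2} identifies with $r(\tau)$.

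Finally I would note that the Gelfand--Tsetlin basis of $\Pi_k$ consists precisely of the infinite two-row tableaux tail-equivalent to $\tau_k$, and that these are exactly the basis vectors $\tau$ appearing in the decomposition $\Pi=\bigoplus_k\Pi_k$ of \eqref{corr0}. Because the isomorphism of Theorem~\ref{th2} sends each such $\tau$ to a homogeneous vector of $\widehat\sl$-degree $r(\tau)$, and because $L_0$ acts as (the homogeneous degree plus the constant $k^2$ already absorbed into the label) on the corresponding $L(1,k^2)$, each $\tau$ is an eigenvector of $L_0$ with eigenvalue $r(\tau)$. This gives $L_0\tau=r(\tau)\tau$.

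The main obstacle I anticipate is bookkeeping the additive constants and normalizations correctly: one must verify that the stable major index $r(\tau)=\lim_n(n^2-\maj([\tau]_{2n}))$ lands on the \emph{absolute} energy scale used by $L_0$ (with $r(\tau_k)=k^2$ matching the conformal dimension of $L(1,k^2)$) rather than a shifted or rescaled version. Concretely, I would check the agreement on the principal tableaux $\tau_k$, where $r(\tau_k)=k^2$ and $\tau_k$ corresponds to the singular vector of $L(1,k^2)$ with $L_0$-eigenvalue $k^2$, and then propagate through the grading-preserving property; the sign flip from passing to $\sl\otimes\C[t^{-1}]$ (as handled in the proof of Theorem~\ref{th2}) and the factor $\sqrt2$ relating $a_n$ to $h_n$ are the places where an error would most easily creep in.
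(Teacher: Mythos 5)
Your proposal is correct and takes essentially the same route as the paper, whose entire "proof" is the sentence that the corollary follows "from the known theory of the basic module $L_{0,1}$": namely, the grading-preserving isomorphism of Theorem~\ref{th2} identifies the stable major index with the homogeneous grading of $L_{0,1}$, the free-boson/Sugawara $L_0$ measures exactly that grading, and the anchor $r(\tau_k)=k^2$ (the singular vector of $L(1,k^2)$) fixes the additive and multiplicative normalization. One small slip worth noting: in your remark on the charge-$2k$ sectors, the zero-mode contribution to the conformal weight is $a_0^2/2=h_0^2/4$, which equals $k^2$ when $h_0=2k$, whereas the term $h_0^2/2$ appearing in $d$ contributes $2k^2$; this is harmless for your argument, since $\Pi_k$ is realized inside the charge-$0$ sector (where $h_0=0$) and your check on the principal tableaux $\tau_k$ pins down the constants independently.
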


Note that,
in view of~\eqref{corr0} and the remark after Lemma~\ref{l:ebasis},
the charge~$0$ subspace $\H[0]$ is identified with the space
$\Pi$ spanned by all infinite two-row Young tableaux with
``correct'' tail behavior. Thus we obtain the following corollary.

\begin{corollary}
The space $\Pi$, which is the countable sum of discrete
representations of the infinite symmetric group $\sinf$, has a
structure of an irreducible representation of the Heisenberg algebra
$\mathfrak A$.
\end{corollary}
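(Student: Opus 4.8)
The plan is to combine the two identifications already established in the paper. First I would recall the chain of isomorphisms developed above: by Theorem~\ref{th2} together with the ``fusion--Fock'' correspondence of Sec.~\ref{sec:fock}, the limit space $\cal V$ is identified with the basic module $\H_0\simeq L_{0,1}$, and under this identification the charge~$0$ subspace ${\cal V}[0]$ matches the charge~$0$ subspace $\H[0]=\H_0[0]$ of the Fock-space realization. On the other side, by the correspondence~\eqref{corr0} the subspace ${\cal V}[0]$ is identified with $\Pi=\bigoplus_{k=0}^\infty\Pi_k$, the span of all infinite two-row Young tableaux with the correct tail behavior. Concatenating these two identifications yields a linear isomorphism $\Pi\longleftrightarrow\H[0]$.

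Next I would transport the Heisenberg structure across this isomorphism. In Sec.~\ref{sec:fock} it is shown that the operators $a_n=\frac1{\sqrt2}h_n$ satisfy the CCR~\eqref{CCR} and thus generate a representation of the Heisenberg algebra $\mathfrak A$ (denoted ${\mathfrak A}_h$) on $\H_0$, and that the charge~$0$ subspace $\H[0]=\C[h_0,h_1,\ldots]\Om_0$ is a single irreducible representation of this algebra: it is the standard Fock representation generated from the singular vector $\Om_0$ by the creation operators $h_{-n}$, which is well known to be irreducible. Pulling this action back through the isomorphism $\Pi\longleftrightarrow\H[0]$ equips $\Pi$ with an action of $\mathfrak A$, and irreducibility is preserved because the isomorphism is a bijection intertwining the two actions by construction.

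The one point requiring a moment of care is that $\Pi$ is visibly a \emph{countable} direct sum $\bigoplus_k\Pi_k$ of distinct discrete $\sinf$-representations, so the claim that it nevertheless carries a single \emph{irreducible} Heisenberg action is the content worth emphasizing; this is exactly what the identification with $\H[0]$ delivers, since the Heisenberg generators $h_{\pm n}$ do not preserve the $\Vir$-decomposition and in fact move between the summands $\Pi_k$ (their energy gradings being linked by the stable major index via Corollary~\ref{cor:vir}). I expect this mixing between the summands to be the only conceptually nontrivial feature, but it is automatic from the Fock-space picture and needs no separate computation. Assembling these observations gives the corollary directly, with no residual estimates.
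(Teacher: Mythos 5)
Your proposal is correct and follows essentially the same route as the paper: the corollary there is obtained precisely by concatenating the identification~\eqref{corr0} of ${\cal V}[0]$ with $\Pi$ and the fusion--Fock identification of ${\cal V}[0]$ with the charge~$0$ subspace $\H[0]$, then invoking the fact (stated in Sec.~\ref{sec:fock}) that each charge subspace $\H_0[2k]$ is an irreducible module of the Heisenberg algebra ${\mathfrak A}_h$. Your closing remark about the Heisenberg generators mixing the summands $\Pi_k$ is not needed for the proof but correctly identifies why the statement is interesting.
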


On the other hand, as mentioned
above, $\H[0]$ can be identified with the algebra of symmetric
functions $\La$ via~\eqref{power}.
Denote by $\Phi$ the obtained isomorphism between $\Pi$ and $\La$, which thus associates with every tableau
$\tau\in\Pi$ a symmetric function $\Phi(\tau)\in\La$ such
that $r(\tau)=\deg\Phi(\tau)$.

Denote
by $T^{(N)}$ the (finite) set of two-row tableaux that coincide with some
$\tau_n$, $n=0,1,{\ldots} $, from the $N$th level. Let $\Pi^{(N)}$ be
the subspace in $\Pi$ spanned by all $\tau\in T^{(N)}$. It follows
from all the above identificatons that
$\Pi^{(2k)}\leftrightarrow F_{2k}[0]$.

\begin{theorem}\label{th3}
Under the isomorphism $\Phi$,
\begin{enumerate}
\item[{\rm1)}]
the principal
tableaux~{\rm\eqref{tauk}} correspond to the Schur functions with square Young
diagrams:
$$\Phi(\tau_k)={\rm const}\cdot s_{(k^k)};
$$

\item[{\rm2)}] the subspace $\Pi^{(2k)}$
correspond to the subspace $\La_{k\times k}$ of $\La$ spanned by the
Schur functions indexed by Young diagrams lying in the $k\times k$
square; the correspondence between the Schur function basis in
$\La_{k\times k}$ and the basis~\eqref{ebasis0} in $\Pi^{(2k)}\simeq
F_{2k}[0]$ is given by
formula~\eqref{gensegal} below.
\end{enumerate}
\end{theorem}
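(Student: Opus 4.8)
The plan is to treat both parts by the same principle: pin down a distinguished vector on the tableau side, transport it through the chain of identifications $\Pi\leftrightarrow{\cal V}[0]=\H[0]\simeq\La$ that defines $\Phi$, and match it against an explicit symmetric function using the vertex operator realization~\eqref{EF}.

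For part~1 I would first characterize $\tau_k$ intrinsically inside the Virasoro module $\Pi_k\simeq L(1,k^2)$. By Corollary~\ref{cor:vir} the operator $L_0$ is diagonal in the Gelfand--Tsetlin basis with eigenvalue $r(\tau)$, and since $r(\tau_k)=k^2$ equals the conformal dimension, i.e.\ the minimal $L_0$-eigenvalue, of $L(1,k^2)$, the tableau $\tau_k$ lies in the lowest-energy subspace. The first real step is then the combinatorial claim that $\tau_k$ is the \emph{unique} element of $\T_k$ minimizing the stable major index, so that this subspace is one-dimensional; this should follow directly from~\eqref{r} together with the explicit shape~\eqref{tauk} of the principal tableau, whose descent set is visibly the smallest possible. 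Granting this, $\tau_k$ must be proportional to the Virasoro singular vector $\xi_{0,k}=\xi_k$ of $L(1,k^2)$ inside $\H[0]$, and Segal's formula~\eqref{segal}, $\xi_k\leftrightarrow c\cdot s_{(k^k)}$, gives $\Phi(\tau_k)={\rm const}\cdot s_{(k^k)}$.

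For part~2 I would begin from the dimension count $\dim\Pi^{(2k)}=\dim F_{2k}[0]=\binom{2k}{k}=\dim\La_{k\times k}$, the last quantity being the number of Young diagrams inside the $k\times k$ box; by the identification $\Pi^{(2k)}\leftrightarrow F_{2k}[0]$ already noted, it suffices to compute the image in $\La$ of the basis~\eqref{ebasis0} and to check that it lands in $\La_{k\times k}$. Since each mode $e_{-j}$ is a Fourier coefficient of the vertex operator $E(z)$ in~\eqref{EF}, and the charge-$0$ condition selects exactly $k$ factors, I would evaluate the series $E(z_1)\cdots E(z_k)\Om_{-2k}$ in the symmetric-function model~\eqref{power}: normal ordering through~\eqref{Ga} produces a Vandermonde-type factor $\prod_{i<j}(1-z_i/z_j)^2$, the lattice operators $z^{\mp h_0}V^{\mp1}$ contribute an explicit monomial prefactor, and the reduced exponentials $\Ga_-(z_i)$ applied to the vacuum give, by the Cauchy identity, $\sum_{\ell(\la)\le k}s_\la(z_1^{-1},\ldots,z_k^{-1})\,s_\la$. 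Extracting the coefficient of the monomial attached to $e_0^{i_0}e_{-1}^{i_1}\cdots e_{-k}^{i_k}$ expresses this basis vector as a symmetric function; identifying $(i_0,\ldots,i_k)$ with the column-height multiplicities of a diagram $\mu$ padded to width $k$ then yields the explicit correspondence~\eqref{gensegal}, with $e_0^k\Om_{-2k}\leftrightarrow s_{(k^k)}$ and $e_{-k}^k\Om_{-2k}\leftrightarrow 1$ as the two extreme cases, the former matching part~1.

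The main obstacle is the bookkeeping in this last step. The $k$-fold product already forces $\ell(\mu)\le k$, i.e.\ at most $k$ rows, but the delicate point is the \emph{column} bound $\mu_1\le k$: a diagram with $\ell(\mu)\le k$ and $|\mu|\le k^2$ need not fit in the box, so one must show that restricting to the modes $e_0,\ldots,e_{-k}$---equivalently, to a bounded range of $z$-degrees---is precisely what truncates the sum from $\{\la:\ell(\la)\le k\}$ down to $\{\mu:\mu\subseteq k\times k\}$. Disentangling the Vandermonde and lattice prefactors from $s_\la(z^{-1})$ so that this truncation becomes transparent, and then pinning down the normalizing constants in~\eqref{gensegal}, is where the real work lies; the sign and charge conventions of~\eqref{EF} have to be handled carefully, since the fusion side is built over $\sl\otimes\C[t^{-1}]$.
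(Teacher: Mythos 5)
Your proposal is correct, and its computational core is exactly the paper's: both arguments hinge on normal-ordering $E(z_k)\cdots E(z_1)\Om_{-2k}$ via \eqref{EF} and \eqref{Ga} and expanding $\Ga_-(z_k)\cdots\Ga_-(z_1)\Om_0$ by the Cauchy identity, followed by constant-term extraction. The packaging, however, differs at both ends, and the comparison is instructive. For part 1 you cite Segal's formula \eqref{segal} and identify $\Phi(\tau_k)$ with the singular vector $\xi_k$ via Corollary~\ref{cor:vir} and $r(\tau_k)=k^2$; this is a legitimate shortcut, and your proposed combinatorial uniqueness lemma is not even needed, since the lowest $L_0$-eigenspace of the irreducible module $L(1,k^2)$ is one-dimensional, so at most one Gelfand--Tsetlin vector can have eigenvalue $k^2$. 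The paper instead re-derives Segal's formula, in the stronger rectangular form \eqref{oursegal}, as the special case $i_1=\cdots=i_k=m$ of the very computation it needs for part 2, so the citation costs it nothing. For part 2 you prove the inclusion $\Phi(\Pi^{(2k)})\subseteq\La_{k\times k}$ and finish by an ungraded dimension count $\binom{2k}{k}$, whereas the paper proves the reverse inclusion $\La_{k\times k}\subseteq\Phi(\Pi^{(2k)})$ by first deriving \eqref{gensegal}: it symmetrizes the $e$-monomials over rearrangements of $(\mu_1,\ldots,\mu_k)$ to produce $m_\mu(z)$ and then contracts with Kostka numbers, finishing with the graded count via the $q$-binomial coefficient. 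This reversal dissolves precisely the obstacle you flag as ``the real work'': in the paper's direction the restriction to $\mu\subset(k^k)$ is automatic, because $K_{\nu\mu}=0$ unless $\mu\le\nu$ (hence $\mu_1\le\nu_1\le k$) and $m_\mu(z)=0$ in $k$ variables unless $l(\mu)\le k$. Your direction does go through as well, but the missing step is a one-line degree check rather than heavy bookkeeping: writing $a_\de(z)$ and $a_{\la+\de}(z^{-1})$ as sums over permutations $\si,\pi\in{\mathfrak S}_k$, a nonzero constant term forces $\la_{\pi(j)}=k-i_j-\si(j)+\pi(j)$ for all $j$, and taking $j$ with $\pi(j)=1$ gives $\la_1\le k$. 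Note also that since the theorem asserts the correspondence in the form \eqref{gensegal}, your route would still have to perform the monomial-to-Schur (Kostka) inversion at the end, so the paper's order of operations is the more economical one.
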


\begin{proof}

We follow Wasserman's \cite{Was} proof of Segal's
result~\eqref{segal}.

Let $0\le i_1,{\ldots} ,i_k\le k$. Then, obviously,
$$
e_{-i_1}{\ldots} e_{-i_k}\Om_{-2k}=\left[\prod_{j=1}^kz_j^{i_j-1}\right]
E(z_k){\ldots} E(z_1)\Om_{-2k},
$$
where by $[\mbox{monomial}]F(z_1,{\ldots} ,z_m)$ we denote the coefficient of this
monomial in $F(z_1,{\ldots} ,z_m)$. Now, using the
representation~\eqref{EF}, the commutation relation~\eqref{Ga}, and
the obvious facts that $V^{-k}\Om_{-2k}=\Om_0$ and
$\Ga_+(z)\Om_0=\Om_0$, we obtain
$$
E(z_k){\ldots}E(z_1)\Om_{-2k}=\prod_{j=1}^kz_j^{2(k-j)}
\prod_{1\le j<i\le k}\left(1-\frac{z_i}{z_j}\right)^2\Ga_-(z_k){\ldots}
\Ga_-(z_1)\Om_0.
$$
Observe that, in view of~\eqref{power} and the well-known fact from
the theory of symmetric functions, $\Ga_-(z)$ is
exactly the generating function of the complete symmetric functions. Hence,
expanding the product $\Ga_-(z_k){\ldots}\Ga_-(z_1)\Om_0$ by the Cauchy identity
(\cite[I.4.3]{Mac}) and making simple transformations, we obtain
$$
E(z_k){\ldots}E(z_1)\Om_{-2k}=
(-1)^{k(k-1)/2}
\prod_{j=1}^kz_j^{k-1}a_\de(z)a_\de(z^{-1})
\sum_{\la:\,l(\la)\le k} s_\la(z^{-1})s_\la,
$$
where
$$
a_\de(z)=\prod_{1\le i<j\le k}(z_i-z_j)=\det[z_i^{k-j}]_{1\le i,j,\le k}
$$
is the Vandermonde determinant, $a_\de(z^{-1})$ is the similar
determinant for the variables $z^{-1}=(z_1^{-1},{\ldots}
,z_k^{-1})$,  $l(\la)$ is the length of the diagram $\la$ (the number of
nonzero rows), $s_\la(z^{-1})$ is the Schur function
calculated at the variables $z^{-1}$, and
$s_\la$ is the Schur function as an element of $\La$ identified
with $\H[0]$. Thus we have
$$
e_{-i_1}{\ldots} e_{-i_k}\Om_{-2k}=(-1)^{k(k-1)/2}\cdot[1]\left(\prod_{j=1}^kz_j^{k-i_j}
a_\de(z)a_\de(z^{-1})
\sum_\la s_\la(z^{-1})s_\la\right).
$$

First consider the case where $i_1={\ldots}=i_k=m$. Then, by the
definition of the Schur functions \cite[I.3.1]{Mac},
$$
\prod z_j^{k-i_j}a_\de(z)=\det[z_i^{2k-m-j}]_{1\le i,j\le k}
=a_\de(z)s_{((k-m)^k)}(z),
$$
where $((k-m)^k)$ is the rectangular Young diagram with $k$ rows of
length $k-m$,
and the standard orthogonality relations imply that
\beq\label{oursegal}
e_{-m}^k\Om_{-2k}=(-1)^{k(k-1)/2}k!\cdot s_{((k-m)^k)}.
\eeq
Since $\xi_k=e_0^k\Om_{-2k}$, for $m=0$ this is Segal's result~\eqref{segal},
which we have now extended to the case of rectangular diagrams.
It is easy to see that the singular vector of $\Vir$ in ${\cal
V}_k[0]$ is just $e_0^k\om_{-2k}\otimes\tau_k$, so that the first
claim of the theorem follows.

We now turn to the case of $i_1,{\ldots} ,i_k$ that are not necessarily
equal. For convenience, set $\te_p:=e_{-(k-p)}$, $0\le p\le k$. Given
$0\le\al_1,{\ldots},
\al_k\le k$, we have
\beq\label{comp1}
\te_{\al_1}{\ldots}\te_{\al_k}\Om_{-2k}
=[1]\left(\prod_{j=1}^kz_j^{\al_j}a_\de(z)\sum_{l(\la)\le
k}a_{\la+\de}(z^{-1})s_\la\right),
\eeq
where $a_{\la+\de}(x)=\det[x_i^{\la_j+k-j}]_{1\le i,j\le k}=s_\la(x)a_\de(x)$. Consider
a Young diagram $\mu=(\mu_1,{\ldots}
,\mu_k)= (0^{r_0}1^{r_1}2^{r_2}{\ldots} )$. Let us
sum~\eqref{comp1}
over all different permutations $\al=(\al_1,{\ldots}
,\al_k)$ of the sequence
$(\mu_1,{\ldots} ,\mu_k)$. Note that
the operators $e_j$ commute with each other, so that the
left-hand side does not depend on the order of the factors. In the
right-hand side, $\sum_\al\prod z_j^{\al_j}=m_\mu(z)$, a monomial
symmetric function. Thus we have
\beq\label{comp2}
\frac{k!}{\prod_{j=0}^k r_j!}\te_{\mu_1}{\ldots} \te_{\mu_k}=
[1]\left(m_\mu(z)a_\de(z)\sum_{l(\la)\le
k}a_{\la+\de}(z^{-1})s_\la\right).
\eeq
Let $\nu$ be a Young diagram with
at most $k$ rows and at most $k$ columns, i.e., $\nu\subset(k^k)$. We have
\beq\label{Kostka}
s_\nu(z)=\sum_{\mu}
K_{\nu\mu}m_{\mu}(z),
\eeq
where $K_{\nu\mu}$ are Kostka numbers. It is well known that
$K_{\nu\mu}=0$ unless $\mu\le\nu$, where
$\le$ is the standard ordering on partitions:
$\mu\le\nu\iff \mu_1+{\ldots} +\mu_i\le\nu_1+{\ldots} +\nu_i$ for
every $i\ge1$. In particular, $\mu_1\le \nu_1\le k$.
Besides, since we consider
only $k$ nonzero variables $z_1,{\ldots} ,z_k$, it also follows that
$m_\mu(z)=0$ unless $l(\mu)\le k$. Thus the sum in~\eqref{Kostka} can
be taken only over diagrams $\mu\subset(k^k)$, for which equation~\eqref{comp2}
holds. Multiplying this equation by $K_{\nu\mu}$ and summing
over $\mu$ yields
$$
\sum_{\mu=(0^{r_0}1^{r_1}2^{r_2}{\ldots} )\subset(k^k)}
\frac{k!}{\prod_{j=0}^k r_j!}K_{\nu\mu}\te_{\mu_1}{\ldots} \te_{\mu_k}=
[1]\left(s_\nu(z)a_\de(z)\sum_{l(\la)\le
k}a_{\la+\de}(z^{-1})s_\la\right).
$$
By the orthogonality relations, the right-hand side is equal to
$k!s_\nu$. Thus we obtain the following formula:
\beq\label{gensegal}
s_\nu=\sum_{\mu=(0^{r_0}1^{r_1}2^{r_2}{\ldots} )\subset(k^k)}\frac{K_{\nu\mu}}
{\prod_{j=0}^k r_j!}e_{-(k-\mu_1)}{\ldots}
e_{-(k-\mu_k)}\Om_{-2k}.
\eeq
Observe that for rectangular diagrams this formula reduces
to~\eqref{oursegal}. Indeed, for $\nu=((k-m)^k)$, all
diagrams $\mu$ with $\mu<\nu$ have $l(\mu)>k$, hence the only nonzero
term in the right-hand side of~\eqref{gensegal} corresponds to
$\mu=\nu$, with $K_{\nu\nu}=1$ and $r_j=k!\de_{j,k-m}$.

It follows from~\eqref{gensegal} that $\La_{k\times k}\subset\Pi^{2k}$.
On the other hand,
the generating functions for the tableaux from
$T^{(2k)}$ and for the Young diagrams lying in the $k\times k$ square
coincide:
$$
\sum_{\tau\in T^{(2k)}}q^{r(\tau)}=\sum_{\la\subset (k^k)}q^{|\la|}=
\left[2k\atop k\right]_q,
$$
where $|\la|$ is the number of cells in a Young diagram $\la$ and
$\left[2k\atop k\right]_q$ is the $q$-binomial coefficient (the
equation for Young diagrams can be found in \cite[Theorem~3.1]{And};
for tableaux, it can be deduced from the known results on the major
index given, e.g., in \cite{St}). This
implies, in particular, that $\dim\La_{k\times k}=\dim\Pi^{2k}$ and completes the proof.
\end{proof}

\end{document}